\documentclass{article}
\usepackage{graphicx} 

\usepackage{paralist,amsmath,amsfonts,amsthm,amssymb,upgreek,dsfont,mathrsfs,textcomp,svrsymbols,slashed}

\usepackage{enumerate, mathtools, xcolor, cite, tikz, verbatim, soul, float, hhline} 

\usepackage{url}

\usepackage{subfig}

\usepackage[a4paper,margin=1in,footskip=0.25in]{geometry}

\title{Vertex orders in higher dimensions}
\author{Bennet Goeckner\thanks{University of San Diego} \ and Marta Pavelka\thanks{Carnegie Mellon University}}

\definecolor{FoundersBlue}{RGB}{0,59,112} 

\newtheorem{theorem}{Theorem}[section]
\newtheorem{corollary}[theorem]{Corollary}

\theoremstyle{definition}
\newtheorem{definition}[theorem]{Definition}

\newtheorem{example}[theorem]{Example}

\newcommand\set[1]{\left\{ #1 \right\}}
\newcommand\abs[1]{\left| #1 \right|}

\newcommand\ideal[1]{\langle #1 \rangle}

\DeclareMathOperator{\link}{link}
\DeclareMathOperator{\del}{del}

\begin{document}

\maketitle

\begin{abstract}
Unit interval and interval complexes are higher-dimensional generalizations of unit interval and interval graphs, respectively. We show that strongly connected unit interval complexes are shellable with shellings induced by their unit interval orders. We also show that these complexes are vertex decomposable and hence shelling completable. On the other hand, we give simple examples of strongly connected interval complexes that are not shellable in dimensions two and higher.
\end{abstract}
\section{Introduction}

Many classes of graphs can be defined via an ordering on their vertices; two such well-known examples are unit interval and interval graphs. In this paper, we study higher-dimensional generalizations of these objects. A (unit) interval graph is the intersection graph of a set of (unit) intervals. Looges and Olario \cite[Theorem 1, Proposition 4]{LO93} characterized these graphs using labelings on their vertices. 
In \cite{BSV}, authors extend the idea of vertex labelings to higher dimensions and define unit interval complexes as a generalization of unit interval graphs, and interval complexes as a generalization of interval graphs. The relationship between these two classes extends from graphs: unit interval complexes are interval. An even weaker class of complexes, semi-closed complexes, is naturally defined in this context even though it does not correspond to a previously known graph class. 

The significance of these classes lies, for example, in their relationship to the existence of Hamiltonian cycles in graphs and simplicial complexes. In the 1980s, Bertossi showed \cite{Bertossi} that if connected, then the unit interval graphs admit a Hamiltonian path, i.e. a path that touches all vertices exactly once. In the 1990s, Chen--Chang--Chang proved \cite{CCC} that 2-connected unit interval graphs admit a Hamiltonian cycle. In \cite[Chapters 1-2]{BSV} the authors generalize both results to simplicial complexes. 

Moreover, the above-mentioned generalizations of graph classes inherit algebraic properties as well. Herzog et al. \cite{HHHKR} first encoded graphs into binomial edge ideals in 2010 and showed that all such binomial edge ideals turn out to be radical. Benedetti, Seccia, and Varbaro \cite{BSV} showed that the determinantal facet ideals of all semi-closed complexes are also radical. 

Shellable complexes are important and extensively studied in combinatorics with connections to topology, geometry, and algebra. 
Pure shellable complexes have particularly nice algebraic and topological properties. For example, their face rings are Cohen--Macaulay and their geometric realizations are homotopy equivalent to wedges of top-dimensional spheres. However, deciding whether a pure $2$-dimensional simplicial complex is shellable is NP-complete as shown by Goaoc et al. \cite{GPPTW}. 

The property of being lex shellable naturally bridges shellability and vertex orders: A shelling is a lex shelling if it is induced by the lexicographic order on the vertices. Matroid independence complexes can be characterized as the pure complexes for which every order of their vertices induces a lex shelling \cite[Theorem~7.3.4]{Bj92}. Though lex shellability is comparatively less well understood than shellability in general, some recent progress has been made. For example, Sentinelli showed that if a complex $\Delta$ has a lex shelling given some vertex order, then any linear extension of the Bruhat order on the facets of $\Delta$ (subject to the same vertex order) is a shelling order \cite[Theorem~3.2]{Se24}.

A stronger property than shellability is vertex decomposability, first introduced in \cite{PB80}. Vertex decomposability implies shellability and shelling completability (introduced and proved in \cite{CDGO}), and all matroid independence complexes are vertex decomposable. In contrast, vertex decomposability turns out to be incomparable with being lex shellable. 

In this paper, we connect the two worlds: relatively new properties defined using vertex orders with classical combinatorial properties. We show that a unit interval order on the vertices of a pure strongly connected simplicial complex induces a lex shelling of this complex (Theorem \ref{thm:unitintervallex}). Moreover, we show that a strongly connected unit interval complex is vertex decomposable and hence shelling completable (Theorem \ref{thm:unitintervaldecomposable}, Corollary \ref{cor:unitintervalcompletable}). For interval orders, the same is true only in dimension one; in higher dimensions, we construct examples of non-shellable complexes that admit interval orders (Theorem \ref{thm:underclosed}). Lastly, a semi-closed order does not induce lex shelling even in graphs (Example \ref{ex:semi_closed_not_lex}). On the other hand, a complex can have lex shelling orders and be vertex decomposable without satisfying any of the aforementioned vertex orders (Example \ref{ex:matroid_not_semi_closed}). 

\section{Preliminaries}

Additional details on simplicial complexes can be found in, for example, \cite{Bj95}. A \textit{simplicial complex} $\Delta$ is a family of sets that is closed under taking subsets. These sets are called \textit{faces} of $\Delta$, and the dimension of a face $E$ is $|E|-1$. The dimension of $\Delta$ is $\max _{E\in\Delta} \dim E$. The top-dimensional faces are called \textit{facets} of $\Delta$, and a simplicial complex is called \textit{pure} if all its facets have the same dimension. The \textit{dual graph} of a pure simplicial complex is obtained by placing a node for each facet and connecting two nodes by an edge if and only if the corresponding facets share a codimension one face. A simplicial complex is \textit{strongly connected} if its dual graph is connected. We recall the definitions of the \textit{link} of $v$, denoted $\link_\Delta(v)$, and the \textit{deletion} of $v$, denoted $\del_\Delta(v)$, of a vertex $v$ in a simplicial complex $\Delta$:
\begin{equation*}
    \link_\Delta(v) = \{E\in\Delta ~|~ v\notin E \text{ and } v\cup E \in \Delta\}  \qquad \del_\Delta(v) = \{ E \in \Delta ~|~ v\notin E\}
\end{equation*}

Throughout the paper, $d$ and $n$ are positive integers and $d<n$. Recall that $[k] = \set{1,\dots,k}$ for any positive integer $k$. We describe each face $E \in \Delta$ as a list of its vertices, for example $E = v_0 v_1 \dots v_k$ where we assume $v_0 < v_1 < \dots < v_k$ unless otherwise stated. Moreover, we describe a simplicial complex as a list of its facets, for example $\Delta = \ideal{F_1,\dots,F_k}$ where the $F_i$ are the facets.

Simplicial complexes in dimension one are simple graphs. \textit{(Unit) interval} graphs are intersection graphs of sets of (unit) intervals (see Figures \ref{fig:intgraph}, \ref{fig:unitint}). These classical graphs have many equivalent definitions, including their description with vertex orders by Looges and Olario \cite{LO93}. A graph $G$ is \emph{interval} if and only if it admits a vertex labeling such that for all $a<b<c$: 
\begin{equation*}
    \text{edge } ac \in G \quad \Longrightarrow \quad \text{edge } ab\in G \ .
\end{equation*}
Such a labeling corresponds to reading intervals from left to right as in Figure \ref{fig:intgraph}. A graph $G$ is \emph{unit interval} if and only if it admits a vertex labeling such that for all $a<b<c$: 
\begin{equation*}
    \text{edge } ac \in G \quad \Longrightarrow \quad \text{edges } ab \text{ and } bc\in G\ .
\end{equation*}

\begin{figure}[hbt] 
    \centering
		\subfloat[]{\includegraphics[height=10em]{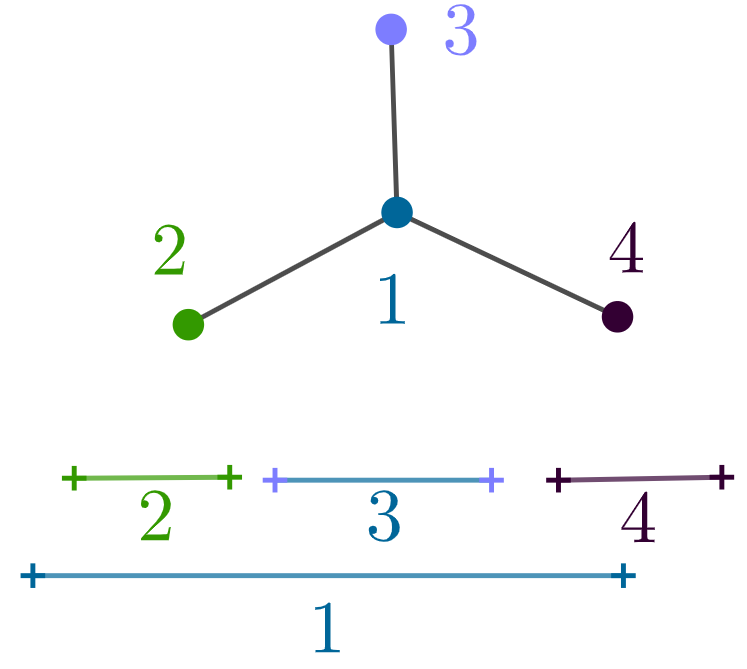}\label{fig:intgraph}}
		\hspace{1.5in}
		\subfloat[]{\includegraphics[height=10em]{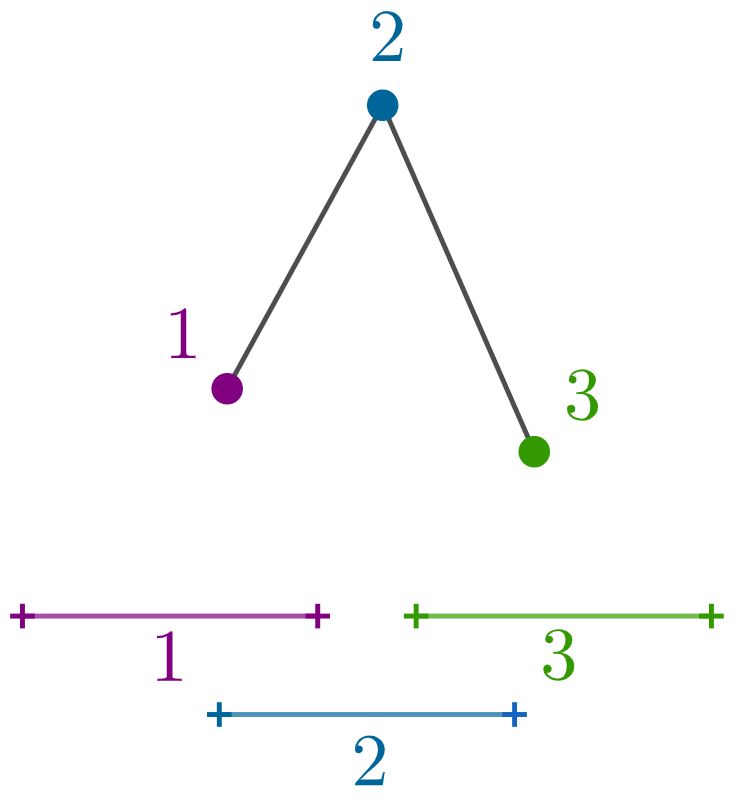}\label{fig:unitint}}
	\caption{(a) A claw graph that is an interval graph but not unit interval graph. (b) A unit interval graph.}
  \label{fig:graphs}
\end{figure}

These vertex order descriptions of (unit) interval graphs gave rise to the following generalizations to simplicial complexes introduced in \cite{BSV}.
\begin{definition}
Let $\Delta$ be a pure $d$-dimensional simplicial complex on $n$ vertices. 
\begin{itemize}
    \item Assume that if $v_0 v_1 \dots v_d$ is a facet of $\Delta$, then $\Delta$ contains the complete $d$-skeleton on $\set{v_0, v_0 + 1 , \dots, v_d}.$ Then this ordering of the vertices is called a \emph{unit interval order}, and any complex with such an ordering is a \emph{unit interval complex}.
    \item Assume that if $v_0 v_1 \dots v_d$ is a facet of $\Delta$, then $\Delta$ contains every facet $v_0 w_1 \dots w_d$, where $w_i \le v_i$ for all $i \in [d].$ Then this ordering of the vertices is called an \emph{interval order}, and any complex with such an ordering is an \emph{interval complex}.
    \item  Assume that if $v_0 v_1 \dots v_d$ is a facet of $\Delta$, then $\Delta$ contains         \begin{compactenum}[(i)]
             \item either all facets $v_0 w_1 \dots w_d$, where $w_i \le v_i$ for all $i \in [d]$,
            \item or all facets $w_0 w_1 \dots w_{d-1} v_d$, where $w_i \geq v_i$ for all $i \in \{0, 1, \dots, d-1\}.$
        \end{compactenum}
    Then this ordering of the vertices is called a \emph{semi-closed order}, and any complex with such an ordering is a \emph{semi-closed complex}.
\end{itemize}
\end{definition}

\begin{figure}[hbt] 
    \centering
		\subfloat[]{\includegraphics[height=10em]{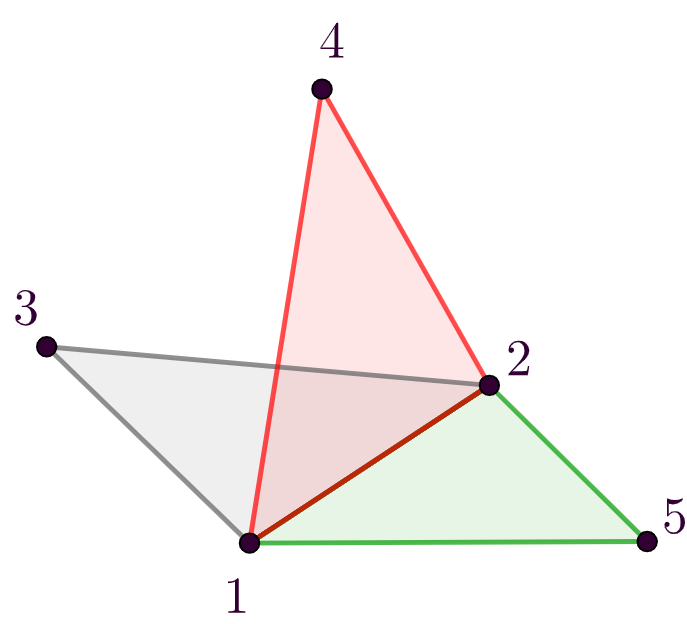}
		\label{fig:undercomplex}}
		\hspace{1.5in}
		\subfloat[]{\includegraphics[height=10em]{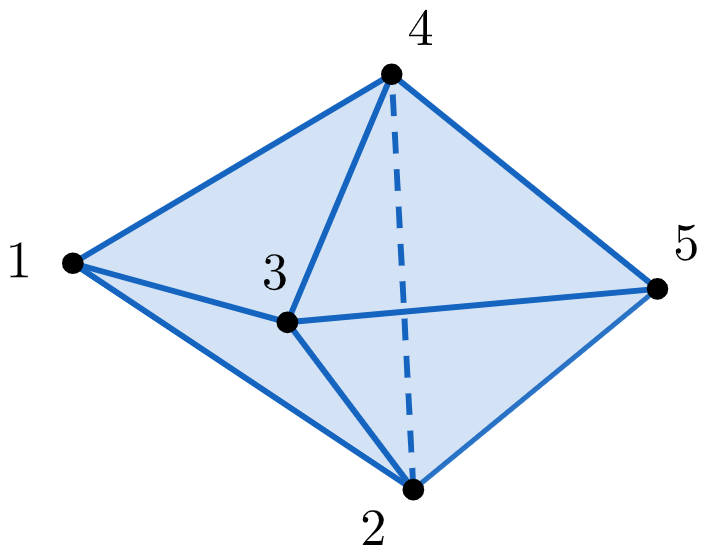}
		\label{fig:unitcomplex}}
		\caption{(a) An interval complex that is not unit interval. (b) A semi-closed complex that is not interval: 123, 124, 134, 235, 245, 345 \cite{BSV}.}
\end{figure}

We note that interval complexes were called \emph{under-closed} complexes in \cite{BSV}. In dimension one, unit interval complexes are precisely unit interval graphs, and interval complexes are precisely interval graphs. Semi-closed complexes, on the other hand, boil down to a natural class of graphs that has not been studied before it was introduced in \cite{BSV}. Furthermore, for strongly connected complexes, the hierarchy depicted in equation \eqref{eq:hierarchy} strictly holds in dimensions greater than zero (Figures \ref{fig:undercomplex}, \ref{fig:unitcomplex}); for undefined terms, see \cite[Section 2.2]{BSV}. 
When not strongly connected, the closed property is incomparable with the others listed in the hierarchy below \cite[Lemma 43]{BSV}; all other relationships still hold.
\begin{equation} \label{eq:hierarchy}
    \{ \text{ closed }\} \subsetneq  \{ \text{ unit interval }\} \subsetneq  \{ \text{ interval }\} \subsetneq  \{ \text{ semi-closed }\} \subsetneq  \{ \text{ weakly-closed }\}
\end{equation}

Closed complexes were defined by Ene et al. \cite{EHHM13} as a generalization of unit interval graphs in the context of generalizing binomial edge ideals. When strongly connected, all closed complexes are unit interval \cite[Proposition 54]{BSV}. The other classes in the above hierarchy were defined in \cite{BSV}.

\section{Shellability}

Shellability occupies a central place in the study of simplicial complexes.

\begin{definition}
    A \emph{shelling} of a pure $d$-dimensional simplicial complex $\Delta$ is an order on its facets $F_1 , \dots , F_m$ such that $\ideal{F_{k+1}} \cap \ideal{F_1,\dots,F_k}$ is pure and $(d-1)$-dimensional for all $k \in [m-1]$. If a shelling is induced by the lexicographic order on the vertices of the complex, we call it a \emph{lex shelling}. If a complex has a (lex) shelling, then it is \emph{(lex) shellable}.
\end{definition}

\begin{example}\label{ex:simple_lex_shell}
    Let $\Delta = \ideal{123,234,345}$ and notice that the given order on the vertices is a unit interval order. The order $123,234,345$ is a shelling order and thus is a lex shelling. Though $234,123,345$ is also a shelling order of $\Delta$, it is not a lex order under this labeling of the vertices.
\end{example}

In dimension one, pure shellability is equivalent to connectedness, and every connected $1$-dimensional complex has a lex shelling (simply label any vertex as $1$ and then label subsequent vertices so that they are adjacent to at least one vertex with a lower label). However, this is not the case in higher dimensions, as shown in the following example due to Doolittle, Lazar, and the first author \cite{DGL}.

\begin{example}\label{ex:VD_no_lex_shellings}

Let $\Delta$ be the following complex on vertex set $\{0,1,2,\dots,9\}.$
$$
\Delta = \ideal{123, 129, 135, 029, 234, 124, 136, 127, 027, 017, 289, 278, 346, 345, 089, 245, 789, 018, 156, 256}
$$
The given order of the facets above is a shelling. However, an exhaustive computer search has verified that no ordering on the vertices of this complex induces a lex shelling. 

\end{example}

Unit interval complexes are not matroid independence complexes in general---Example~\ref{ex:simple_lex_shell} is not a matroid because some orders on the vertices do not induce a shelling (notice that the order $1<3<5<2<4$ gives $123,345,234$ which is not a shelling). However, the following theorem shows that any unit interval order on a pure strongly connected complex induces a lex shelling.

\begin{theorem} \label{thm:unitintervallex}
Let $\Delta$ be a pure strongly connected $d$-dimensional simplicial complex and assume $[n]$ is a unit interval order on its vertex set. This order induces a lex shelling on $\Delta.$ 
\end{theorem}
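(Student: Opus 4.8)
The plan is to verify, for the lexicographic order $F_1,\dots,F_m$ on the facets of $\Delta$, the standard criterion that an ordering of facets is a shelling if and only if for all $i<k$ there is a $j<k$ with $F_i\cap F_k\subseteq F_j\cap F_k$ and $|F_j\cap F_k|=d$. The case $|F_i\cap F_k|=d$ is handled by $j=i$, so it suffices to show: for every $i<k$ with $|F_i\cap F_k|\le d-1$ there is a vertex $v\in F_k\setminus F_i$ with $F_k\setminus\{v\}$ contained in some lexicographically earlier facet. The engine throughout is the defining property of a unit interval order, used in the form: if $v_0<v_1<\dots<v_d$ is a facet, then every $(d+1)$-element subset of the integer interval $\{v_0,v_0+1,\dots,v_d\}$ is a $d$-dimensional face of $\Delta$, hence a facet by purity.

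Fix such a pair $i<k$, write $F_k=v_0v_1\cdots v_d$ increasingly, let $t$ be the first position where the increasing sequences of $F_i$ and $F_k$ differ, let $u$ be the $t$-th entry of $F_i$ (so $u<v_t$ and $u\notin F_k$, since either $u<v_0$ or $v_{t-1}<u<v_t$), set $P=\{\,p:v_p\notin F_i\,\}$ (so $|P|\ge 2$ and $P\subseteq\{t,\dots,d\}$), and let $s$ be the largest index such that $v_0,v_1,\dots,v_s$ are consecutive integers. The main case is that $P$ contains some $p$ with $s+1\le p\le d-1$: then $v_p>v_0+p$, so there is an integer $w$ with $v_0\le w<v_p$ and $w\notin F_k$, and $(F_k\setminus\{v_p\})\cup\{w\}$ is a $(d+1)$-subset of $\{v_0,\dots,v_d\}$, hence a facet; it precedes $F_k$ lexicographically (it agrees with $F_k$ up to the slot where $w$ is inserted and is smaller there) and contains $F_k\setminus\{v_p\}\supseteq F_i\cap F_k$, so $v=v_p$ works. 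This case always occurs when $t\ge 1$: an integer $u$ strictly between $v_{t-1}$ and $v_t$ forces the consecutive run to stop before position $t$, whence $s+1\le t\le\min P\le d-1$ (using $|P|\ge 2$ for the last inequality).

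It remains to treat $t=0$ — so $\min F_i<v_0$ — together with $P\subseteq\{0,1,\dots,s\}\cup\{d\}$. If $v_d\in F_i$, then $F_i$ spans an interval containing $\{v_0,\dots,v_d\}$, so for any $p\in P$ (necessarily $p\le d-1$) the set $(F_k\setminus\{v_p\})\cup\{\min F_i\}$ is a facet by the unit interval property applied to $F_i$, precedes $F_k$, and contains $F_i\cap F_k$. If $v_d\notin F_i$, I instead cover $F_k\setminus\{v_d\}$: if $F_k$ is not an interval of consecutive integers there is an integer $w\in\{v_0,\dots,v_d-1\}\setminus F_k$ and $(F_k\setminus\{v_d\})\cup\{w\}$ works as above; while if $F_k=\{v_0,v_0+1,\dots,v_0+d\}$, then necessarily $v_0\ge 2$ (if $v_0=1$ then $F_k$ is the lexicographically first facet and no $i<k$ exists), and here I invoke the step I expect to be the crux of the argument — a "left extension" lemma: in a strongly connected unit interval complex, if $\{v_0,v_0+1,\dots,v_0+d\}$ is a facet with $v_0\ge 2$, then $\{v_0-1,v_0,\dots,v_0+d-1\}$ is also a facet (and $\{v_0-1\}\cup(F_k\setminus\{v_d\})$ then does the job).

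To prove the lemma I would use strong connectedness directly. The vertex $v_0-1$ lies in some facet $G$; join $G$ to $\{v_0,\dots,v_0+d\}$ by a path $H_0=\{v_0,\dots,v_0+d\},H_1,\dots,H_\ell=G$ in the dual graph, and let $j$ be least with $\min H_j<v_0$. Since consecutive facets differ in one vertex and $\min H_{j-1}\ge v_0$, the vertex swapped into $H_j$ is its new minimum $y=\min H_j\le v_0-1$, while $\max H_j=\max(H_{j-1}\setminus\{x\})\ge v_0+d-1$ because $H_{j-1}\setminus\{x\}$ is a set of $d$ distinct integers all at least $v_0$. Thus the integer interval $\{y,\dots,\max H_j\}$ contains $\{v_0-1,v_0,\dots,v_0+d-1\}$, and applying the unit interval property to the facet $H_j$ shows this set is a facet. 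Apart from this lemma, every case reduces to the same bookkeeping: exhibit a $(d+1)$-set that is a facet (using the unit interval property on either $F_k$ or $F_i$), check it contains $F_i\cap F_k$, and check it precedes $F_k$ lexicographically by comparing increasing sequences. I expect this lemma — and, within it, spotting the right single swap along a dual-graph path — to be the only genuinely delicate point, and the only place strong connectedness is essential, as it must be since strong connectedness is already necessary for shellability.
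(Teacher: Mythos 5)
Your proposal is correct, and it is organized differently from the paper's argument, though both rest on the same two engines: the unit interval property in the form ``every $(d+1)$-subset of the integer interval spanned by a facet is a facet,'' and strong connectedness used only to handle gap-free facets via a dual-graph path. You verify the pairwise criterion (for each $i<k$, find $j<k$ with $F_i\cap F_k\subseteq F_j\cap F_k$ and $\abs{F_j\cap F_k}=d$), with the case analysis keyed to the first index where $F_i$ and $F_k$ disagree; the swaps you exhibit use the unit interval property sometimes on $F_k$ and sometimes on $F_i$, and the only delicate point is isolated in your left-extension lemma, which you prove by walking a dual path from the gap-free facet to any facet containing $v_0-1$ and pivoting at the first facet whose minimum drops below $v_0$. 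The paper instead proceeds incrementally: assuming $F_1,\dots,F_k$ is a partial shelling, it splits on whether $F_{k+1}$ has a gap; in the gap case it swaps the lowest gap vertex into $F_{k+1}$ and separately handles faces of the intersection containing all of $v_i,\dots,v_d$, while in the gap-free case it examines the largest vertex of $F_{k+1}$ already in $\Delta_k$ and derives a contradiction from a shortest dual path from $F_{k+1}$ to $\Delta_k$ when that vertex sits too low. Your route buys modularity: the strong-connectedness input is packaged as a clean, reusable statement (essentially a local version of the existence of all gap-free facets, \cite[Theorem~56]{BSV}, which the paper only invokes later in the vertex-decomposability proof), and your case bookkeeping avoids the paper's subdivision of the gap-free case into $\ell=d-1$, $\ell=d$, $\ell<d-1$. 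The paper's route stays closer to the definition of a shelling step and to how one would check a lex shelling facet by facet. Both proofs are complete; yours could be shortened further by simply citing \cite[Theorem~56]{BSV} in place of your lemma.
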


\begin{proof}
Let $F_1, F_2, \dots , F_m$ be the facets of $\Delta$ ordered lexicographically. Let $k \in [m-1]$ and define $\Delta_k = \ideal{F_1, \dots, F_k}.$ Assume that $F_1, \dots, F_k$ is a shelling of $\Delta_k$, 
and let $F_{k+1} = v_0 v_1 \dots v_d$ with $v_0 < v_1 < \dots < v_d$. We will show that adding $F_{k+1}$ is a shelling step.

We first consider the case where $F_{k+1}$ has a gap, i.e., there exists an $x \in [n]$ and an $i \in [d]$ such that $v_{i-1} < x < v_{i}$. Further assume that there are no lower gaps in $F_{k+1}$, i.e., that $v_j = v_0 + j$ for all $j < i.$ Let $\sigma \in \ideal{F_{k+1}} \cap \Delta_k$ with $\dim \sigma < d-1$. We will show that $\sigma$ is contained in some $\tau$ such that $\tau \in \ideal{F_{k+1}} \cap \Delta_k$ and $\dim \tau = d-1,$ which will establish the desired result in this case.

Observe, by the unit interval property, that $v_0 v_1 \dots v_{i-1} x \boldsymbol{v}$ is a facet of $\Delta$ for all $\boldsymbol{v} \subseteq \set{v_i,v_{i+1},\dots,v_d}$ with $\abs{\boldsymbol{v}} = d-i-1$. (In other words, $\abs{\set{v_i,v_{i+1},\dots,v_d} \setminus \boldsymbol{v}} = 1.$) Furthermore, each of these facets is lexicographically before $F_{k+1}$, so they are facets of $\Delta_k.$ If $\sigma$ does not contain all vertices $v_i,v_{i+1},\dots,v_d$, then one of these facets will also contain $\sigma$. Intersecting this facet with $F_{k+1}$ gives the desired $\tau$.

Instead, assume that $\sigma$ contains all vertices $v_i,v_{i+1},\dots,v_d$. Thus, there exist vertices $y_0 < y_1 < \dots < y_{\ell}$ such that $E = y_0 y_1 \dots y_{\ell} \sigma$ is a facet of $\Delta_k.$ This implies that $y_0 \le v_0$. We claim that $y_0 = v_0$ produces a contradiction: Assume this is the case. Since $E < F_{k+1}$ and $i$ is the lowest gap in $F_{k+1}$, we see that $E$ contains the vertices $v_0,v_1,\dots v_i$, hence $E = F_{k+1}$. This implies that $y_0 < v_0.$ Thus, by the unit interval property, $y_0 v_1 v_2 \dots v_d$ is a facet of $\Delta$ and hence $\Delta_k$, so $\tau = v_1 v_2 \dots v_d$ contains $\sigma$.

Consider now the case where $F_{k+1} = v_0v_1\dots v_{d}$ is gap-free, that is, $v_j = v_0 + j$ for all $j \in [d]$. 

Let $v_\ell$ be the largest vertex of $F_{k+1}$ which is also in $\Delta_k$. We first claim that if $\ell \ge d-1$, then adding $F_{k+1}$ is a shelling step.

To see this, look at the case $\ell=d-1$. Thus $v_{d-1} \in \Delta_k$ and $v_{d} \notin \Delta_k.$ Since $v_d \notin \Delta_k$, there is a facet $x_0 x_1 \dots x_{d-1} v_{d-1} \in \Delta_k$ with $x_{i-1} < x_{i}$ for $i \in [d-1]$. Since $F_{k+1}$ is gap-free, we see that $x_0 < v_0.$ Thus, the unit interval property shows that $x_0 v_0 \dots v_{d-1} \in \Delta$ and, furthermore, $x_0 v_0 \dots v_{d-1} \in \Delta_k$ by the lexicographic order on the facets of $\Delta$. Since $v_{d} \notin \Delta_k$, this shows that $\langle F_{k+1}\rangle \cap \Delta_k = \langle v_0 v_1 \dots v_{d-1} \rangle$, so adding $F_{k+1}$ is a shelling step.

We instead consider if ${\ell} = d$. By the same logic of the previous paragraph, we see that there exist vertices $x_i$ such that $x_0 \dots x_{d-1} v_{d} \in \Delta$ with $x_0 < v_0$. The unit interval property and the lexicographic order on the facets together show that $x_0 \boldsymbol{v} \in \Delta_k$ for all $\boldsymbol{v} \subseteq \{v_0 , v_1 , \dots , v_{d}\}$ with $|\boldsymbol{v}|=d$. Thus, $\langle F_{k+1}\rangle \cap \Delta_k$ is the complete $(d-1)$-skeleton of $\langle F_{k+1} \rangle$, so again adding $F_{k+1}$ is a shelling step.

We assume now that ${\ell} < d-1$ and we will deduce a contradiction. Since ${\ell} < d-1$, we see that $\dim (\langle F_{k+1}\rangle \cap \Delta_k) < d-1$. Since $\Delta$ is strongly connected, we can find a shortest path from $F_{k+1}$ to $\Delta_k$ in the dual graph of $\Delta$. 

Let $x_0 x_1 \dots x_{d}$ be the last facet in this path (hence in $\Delta_k$) and let $y x_1 \dots x_{d}$ be the previous facet in this path. We know that $y x_1 \dots x_{d} \ne F_{k+1}$ because the dimension of $\langle F_{k+1}\rangle \cap \Delta_k$ is lower than $d-1$.

Thus we see that
\begin{equation}
    x_0 x_1 \dots x_{d} < v_0 v_1 \dots v_{d} < y x_1 \dots x_{d}
\end{equation}
in lexicographic order. So $\min \{ x_0, x_1, \dots, x_{d} \} < v_0$ and $\min \{y, x_1, \dots , x_{d} \} \ge v_0$ because $F_{k+1}$ is the lowest possible facet in lexicographic order with minimum element $v_0.$ Thus $\min \{ x_0, x_1, \dots, x_{d} \} = x_0.$ Without loss of generality, we assume that $x_1 < x_2 < \dots < x_{d}$ and observe that $v_0 \le x_1$.

However, we see that $x_{d} \ge v_{d}$ because $F_{k+1}$ is gap-free. 

Thus the unit interval property and the lexicographic order on the facets together imply that $x_0\boldsymbol{v} \in \Delta_k$ for all $\boldsymbol{v} \subseteq \{v_0 , v_1 , \dots , v_{d}\}$ with $|\boldsymbol{v}|=d$. In particular, we see that $x_0 v_0 v_1 \dots v_{d-1} \in \Delta_k$. But this contradicts the assumption that ${\ell} < d-1$. Therefore the case where ${\ell} < d-1$ cannot occur.
\end{proof}

In dimension one, we are able to weaken the unit interval property to interval and still induce a lex shelling. However, in higher dimensions, this is no longer the case.

\begin{theorem}\label{thm:underclosed}

 Let $\Delta$ be a pure strongly connected $d$-dimensional simplicial complex. If $d=1$, then any interval  order on the vertices of $\Delta$ induces a lex shelling. If $d>1$, there exist complexes with interval labelings that are not shellable.

\end{theorem}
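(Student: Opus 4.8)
The plan is the following.

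\emph{Case $d=1$.} I will show that the lexicographic order $F_1,\dots,F_m$ on the edges of $\Delta$ is itself a shelling. Since $\langle F_{k+1}\rangle \cap \langle F_1,\dots,F_k\rangle$ is a proper subcomplex of the edge $\langle F_{k+1}\rangle$, the shelling condition amounts to this intersection being nonempty, that is, to $F_{k+1}$ sharing a vertex with an earlier edge. Write $F_{k+1}=ac$ with $a<c$. If $c>a+1$, the interval property applied to $ac$ gives $a(a+1)\in\Delta$; this edge precedes $ac$ lexicographically, hence equals some $F_j$ with $j\le k$, and it shares the vertex $a$. So assume $F_{k+1}=a(a+1)$; here $a\ge 2$ because $12$ is the lexicographically smallest edge and $k\ge 1$. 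Now $\{F_1,\dots,F_k\}$ is exactly the set of edges of $\Delta$ whose smaller endpoint is at most $a-1$. Suppose toward a contradiction that none of them meets $\{a,a+1\}$. Then $\Delta$ has no edge between $\{1,\dots,a-1\}$ and $\{a,\dots,n\}$: an edge $uv$ with $u\le a-1$ and $v\ge a$ would have $v\ge a+2$ by the assumption, and the interval property applied to $uv$ would force $ua\in\Delta$ with $u\le a-1$, a contradiction. But this separation contradicts the connectedness of $\Delta$ (it separates vertex $1$ from vertex $a$). Hence $F_{k+1}$ meets an earlier edge and the lex order is a shelling.

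\emph{Case $d>1$: the example.} On vertex set $[2d+2]$ let $\Delta_d$ be the pure $d$-dimensional complex with facets $A_j:=\{1,2,\dots,d\}\cup\{j\}$ for $d+1\le j\le 2d+2$ together with the consecutive blocks $C_k:=\{k,k+1,\dots,k+d\}$ for $2\le k\le d+2$; for instance $\Delta_2=\langle 123,124,125,126,234,345,456\rangle$. I will verify that the identity order on $[2d+2]$ is an interval order by checking the closure condition facet by facet: $A_j$ only forces the facets $A_{j'}$ with $j'\le j$, and $C_k$ only forces itself (in each case the tuple of nonminimal vertices of the facet is as small as possible given the minimum). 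I will also check strong connectedness: the $A_j$ pairwise share the codimension-one face $\{1,\dots,d\}$, the $C_k$ form a path since $C_k$ and $C_{k+1}$ share $\{k+1,\dots,k+d\}$, and $C_2$ shares $\{2,\dots,d+1\}$ with $A_{d+1}$.

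\emph{Case $d>1$: non-shellability.} The only facets of $\Delta_d$ containing the maximum vertex $2d+2$ are $A_{2d+2}=\{1,\dots,d,2d+2\}$ and $C_{d+2}=\{d+2,\dots,2d+2\}$, so $\link_{\Delta_d}(2d+2)$ is the disjoint union of the $(d-1)$-simplices on $\{1,\dots,d\}$ and on $\{d+2,\dots,2d+1\}$. This link is disconnected but has dimension $d-1\ge 1$, so its reduced homology in degree $0$ is nonzero, and Reisner's criterion shows $\Delta_d$ is not Cohen--Macaulay over any field. Since a pure shellable complex is Cohen--Macaulay, $\Delta_d$ is not shellable. (Equivalently, the subsequence of facets through a vertex inherits a shelling of its link, and a disconnected complex of positive dimension is not shellable.)

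The main obstacle is finding the example in the case $d>1$; verifying that it works is then routine. The interval condition is surprisingly rigid — a single facet forces an entire downset of lower facets — so natural candidates for non-shellable complexes (triangulated M\"obius bands, annuli, and the like) tend either to fail to be interval or to turn out shellable after all. The point of the construction is to place the ``obstruction vertex'' last, so that the two facets through it can be chosen (a consecutive block, and $\{1,\dots,d\}$ together with the maximum vertex) with disjoint, far-apart links whose two components are never bridged by a face forced by the interval property, while the blocks $C_2,\dots,C_{d+2}$ are exactly what is needed to keep the dual graph connected.
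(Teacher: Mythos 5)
Both halves of your argument are correct, and the overall strategy matches the paper's, but your $d>1$ construction is a different example, so a short comparison is worth recording. For $d=1$ the paper also reduces the shelling step to the new edge sharing a vertex with the earlier subcomplex; it then takes a path in $\Delta$ from the partial shelling to the new edge and applies the interval property to the last vertex already present, whereas you run a vertex-cut argument (no edge can cross from $\{1,\dots,a-1\}$ to $\{a,\dots,n\}$, contradicting connectedness) -- same ingredients, equally valid. For $d>1$ the paper uses a leaner witness: the complex on $d+3$ vertices with just five facets $\{1,\dots,d,d+1\}$, $\{1,\dots,d,d+2\}$, $\{1,\dots,d,d+3\}$, $\{2,\dots,d+2\}$, $\{3,\dots,d+3\}$, whose link at the vertex $d+3$ consists of two $(d-1)$-simplices meeting only in a codimension-two face, hence is not shellable. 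Your $\Delta_d$ lives on $2d+2$ vertices with $2d+3$ facets, and I checked your verifications: the downset forced by each $A_j$ is exactly $A_{d+1},\dots,A_j$, each consecutive block $C_k$ forces only itself, the dual graph is connected through $\{1,\dots,d\}$, the chain of $C_k$'s, and $C_2\cap A_{d+1}=\{2,\dots,d+1\}$, and the link of $2d+2$ is the disjoint union of two $(d-1)$-simplices. The trade-off: the paper's example is minimal in size, while yours buys a cleaner obstruction -- a disconnected link of positive dimension, so non-shellability follows either from the elementary fact that pure shellable complexes of dimension at least one are connected (applied to the link) or, as you note, from Reisner's criterion, without needing the slightly finer observation that two facets sharing only a codimension-two face cannot be shelled.
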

\begin{proof}

    For $d=1$, assume $\Delta$ is a connected graph with an interval (i.e., interval) order on its vertices. We can use an analogous argument as in the proof of Theorem \ref{thm:unitintervallex} to show that this vertex order induces a lex shelling on $\Delta$:

    Assume that we have a partial lex shelling of $\Delta$ and let $\Delta'$ be the complex generated by this partial shelling. Further assume that we are attempting to add the edge $jk$ with $j < k$. To show that adding $jk$ is a shelling step, we only need to guarantee that at least one of $j$ and $k$ is contained in $\Delta'.$

    Assume that neither $j$ nor $k$ is in $\Delta'.$ Since $\Delta$ is connected, there exists a path from $\Delta'$ to the edge $jk$. Let $\ell$ be the last vertex in this path that is contained in $\Delta'$. If $\ell < j$, then all edges containing $\ell$ would be contained in $\Delta'$, so $j < \ell.$ Thus $i \ell \in \Delta'$ for some $i<j.$ Since the order on the vertices of $\Delta$ is interval, this implies that $ij \in \Delta.$ Observe that $ij < jk$, which implies that $ij \in \Delta'$, which is a contradiction. Thus at least one of $j,k$ is contained in $\Delta'$ and therefore adding $jk$ is a shelling step. 

    For $d\geq 2$, consider the following complex $\Delta$ with 5 facets. 
    \begin{align*}
    \Delta = \langle &12 \cdots(d-1)d(d+1), \\ 
                     &12\cdots (d-1)d(d+2), \\
                     &12\cdots (d-1)d(d+3), \\
                     &23\cdots d(d+1)(d+2), \\
                     &34\cdots (d+1)(d+2)(d+3)  \rangle
    \end{align*}
    
    Observe that the link of the vertex labeled $(d+3)$ is 
    \begin{align*}
    \link_\Delta(d+3) = \ideal{12\cdots (d-1)d, 34\cdots (d+1)(d+2)}.
    \end{align*}
    
    It is well known that links in shellable complexes are themselves shellable. The above link is not shellable (notice that its two facets share only a codimension $2$ face), and therefore the whole complex is not shellable.
\end{proof}

However, even in dimension one, we cannot further weaken the vertex ordering to a semi-closed order and be guaranteed a lex shelling, as shown in the following example.

\begin{example} \label{ex:semi_closed_not_lex}
    Consider the graph on 4 vertices that is a triangle with a leaf. Consider a labeling on its vertices so that its edges are 14, 23, 24, 34. This labeling is semi-closed, but does not induce lex shelling.    
\end{example}

\section{Vertex decomposability}

A strictly stronger notion than shellability is vertex decomposability.

\begin{definition}
    A pure simplicial complex $\Delta$ is \emph{vertex decomposable} if
    \begin{compactenum}[(i)]
        \item either $\Delta$ is a simplex or $\Delta = \{\varnothing\}$,
        \item\label{def:shed} or there exists a vertex $v$ such that $\link_\Delta(v)$ and $\del_\Delta(v)$ are vertex decomposable, and every facet of $\del_\Delta(v)$ is a facet of $\Delta$.
    \end{compactenum}
    A vertex $v$ described in \eqref{def:shed} is called a \textit{shedding vertex}.
\end{definition}

We can verify that a complex is vertex decomposable by giving a \emph{shedding order}, i.e., a list of its vertices so that each one is a shedding vertex after deleting the vertex before it. Perhaps unsurprisingly, there exist vertex decomposable complexes with no lex shellings.

\begin{example}\label{ex:point_above}
    The complex $\Delta$ in Example~\ref{ex:VD_no_lex_shellings} is shellable but wiht no lex shellings, as mentioned above. It is also vertex decomposable. One shedding order is $6,5,4,3,1,0,7.$ 
\end{example}

In fact, lex shellability and vertex decomposability are incomparable. There also exist complexes with lex shellings that are not vertex decomposable.

\begin{example}\label{ex:Hachimori}\cite[Section~5.3]{Ha00}
    The following complex is Hachimori's well-known example of a two-dimensional complex that is shellable but not vertex decomposable (or extendably shellable). We have relabeled the vertices so that the standard order $1<2<\dots<7$ induces a lex shelling.
    $$
    \Delta = \ideal{124,125,135,136,147,167,235,236,246,346,347,357,567}
    $$
    To recover Hachimori's original labeling, one can use the map
    $$1 \mapsto 5 ~~~~ 2 \mapsto 6 ~~~~ 3 \mapsto 2 ~~~~ 4 \mapsto 7 ~~~~ 5 \mapsto 3 ~~~~ 6 \mapsto 1 ~~~~ 7 \mapsto 4.$$ We note in passing that of the $7!$ orderings on the vertices of this complex, only $7$ induce a lex shelling.
\end{example}

It turns out that for strongly connected complexes, the unit interval property is enough to guarantee vertex decomposability as well.

\begin{theorem}\label{thm:unitintervaldecomposable}
Let $\Delta$ be a  pure strongly connected $d$-dimensional simplicial complex and assume $[n]$ is a unit interval order on its vertex set. Then $\Delta$ is vertex decomposable.
\end{theorem}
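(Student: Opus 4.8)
The plan is to induct on $n$, the number of vertices, using the vertex $n$ (the largest in the unit interval order) as a candidate shedding vertex. First I would observe that $\del_\Delta(n)$ is itself a pure strongly connected complex with a unit interval order on $[n-1]$ — \emph{provided} deleting $n$ does not lower the dimension or disconnect the dual graph; handling the case where it does will be the crux of the argument (see below). Assuming strong connectivity is preserved, $\del_\Delta(n)$ is vertex decomposable by induction. For the link, I would show $\link_\Delta(n)$ is a pure $(d-1)$-dimensional complex and argue that the induced order on its vertices is again a unit interval order: if $w_0 \cdots w_{d-1}$ is a facet of $\link_\Delta(n)$, then $w_0 \cdots w_{d-1} n$ is a facet of $\Delta$, so by the unit interval property $\Delta$ contains the full $d$-skeleton on $\{w_0, w_0+1, \dots, n\}$, and intersecting with the link shows $\link_\Delta(n)$ contains the full $(d-1)$-skeleton on $\{w_0, \dots, w_{d-1}\}$ (here one must check $w_{d-1} < n$, i.e. that $n$ is the top vertex of its facets, which follows since $n$ is the largest label). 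Strong connectivity of the link is less automatic, but it can be extracted from strong connectivity of $\Delta$ together with the unit interval property — roughly, adjacent facets through $n$ in $\Delta$ project to adjacent facets in the link, and the skeleton structure forced by the unit interval property fills in any gaps.

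The main obstacle is that $n$ need not be a shedding vertex in general: $\del_\Delta(n)$ could fail to be pure (some facet of $\Delta$ not containing $n$ is fine, but we need \emph{every} facet of $\del_\Delta(n)$ to be a facet of $\Delta$ of full dimension $d$) or, more seriously, could become disconnected or drop dimension. To handle this I would not fix the shedding vertex as $n$ a priori, but rather choose the shedding vertex to be the \emph{largest} vertex $v$ such that $\del_\Delta(v)$ is still pure, strongly connected, and $d$-dimensional — or argue directly that $n$ always works because the unit interval property is quite rigid. Concretely: if $v_0 \cdots v_d$ is any facet of $\Delta$ with $v_d = n$, then by the unit interval property $\Delta$ contains the complete $d$-skeleton on $\{v_0, \dots, n\}$, so in particular $v_0 \cdots v_{d-1}(n-1) \in \Delta$; this should let one show that deleting $n$ leaves a complex in which every facet previously sharing a ridge with an $n$-facet is still present, preserving strong connectivity, and that no facet of $\del_\Delta(n)$ has dimension below $d$ unless $\Delta$ itself was pathological. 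If a clean argument that $n$ always works proves elusive, the fallback is the "largest good vertex" selection, after checking that such a vertex exists (it does, since strong connectivity of $\Delta$ forces at least one vertex whose deletion keeps the complex a pure $d$-complex — one can take a leaf of a spanning tree of the dual graph and use one of its exclusive vertices).

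After establishing that a suitable shedding vertex $v$ exists with $\link_\Delta(v)$ and $\del_\Delta(v)$ both pure, strongly connected, $d$- and $(d{-}1)$-dimensional respectively, and both carrying inherited unit interval orders, the induction closes: both are vertex decomposable by the inductive hypothesis (the link has strictly smaller dimension or strictly fewer vertices; the deletion has strictly fewer vertices), the condition that every facet of $\del_\Delta(v)$ is a facet of $\Delta$ holds by construction, and the base case $n = d+1$ is a simplex. I expect roughly half the work to be the bookkeeping that "unit interval order restricted to a deletion/link is again a unit interval order" — routine but needing care about which coordinate of a facet the vertex $v$ occupies — and the other half to be the strong-connectivity preservation, which is where the unit interval hypothesis (as opposed to merely interval) is genuinely used, mirroring how Theorem~\ref{thm:unitintervallex} breaks for interval complexes in Theorem~\ref{thm:underclosed}.
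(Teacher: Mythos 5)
Your overall strategy---induct on the number of vertices and shed the largest vertex $n$---is exactly the paper's, but the step you yourself flag as the crux is left genuinely open, and the concrete argument you sketch for it breaks. The paper closes this step with a structural fact from \cite[Theorem~56]{BSV}: a strongly connected unit interval complex on $[n]$ contains every gap-free facet $H_i = \set{i, i+1, \dots, i+d}$ for $i \in [n-d]$, and conversely a unit interval complex containing all the $H_i$ is strongly connected. Hence $\del_\Delta(n)$ contains $H_1, \dots, H_{n-d-1}$ and is therefore strongly connected (and this also disposes of purity and the shedding condition: for a facet $F \ni n$, if $F \ne \set{n-d,\dots,n}$ the $d$-skeleton on $\set{\min F,\dots,n}$ supplies a facet avoiding $n$ that contains $F \setminus \set{n}$, and if $F = \set{n-d,\dots,n}$ then $H_{n-d-1}$ does). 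Your concrete attempt---``$v_0 \cdots v_{d-1}(n-1) \in \Delta$''---fails precisely in this boundary case: when $v_{d-1} = n-1$, in particular when the facet is the gap-free one $\set{n-d,\dots,n}$, the $d$-skeleton on $\set{v_0,\dots,n}$ is just that facet and offers no replacement vertex; you need the existence of another facet, which comes from strong connectivity of $\Delta$ via the $H_i$'s, not from the unit interval property applied to a single facet. Your fallback of a ``largest good vertex'' or an exclusive vertex of a leaf of a spanning tree of the dual graph does not repair this: a leaf facet need not have an exclusive vertex, and, more fundamentally, deleting a vertex other than the largest (or smallest) can destroy the unit interval property of the induced order, so the inductive hypothesis would no longer apply to the deletion ($\del_\Delta(n)$ being unit interval is \cite[Lemma~58]{BSV} and uses that $n$ is extremal).

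For the link you propose to verify a unit interval order and strong connectivity and then invoke induction, but you leave the strong connectivity of $\link_\Delta(n)$ as a hand-wave. In fact no induction is needed there: every facet of $\Delta$ containing $n$ has top vertex $n$, so by the unit interval property it contributes to $\link_\Delta(n)$ the full $(d-1)$-skeleton on the interval $\set{\min F, \dots, n-1}$; these intervals all end at $n-1$ and are therefore nested, so $\link_\Delta(n)$ is exactly the $(d-1)$-skeleton of the simplex on $\set{j,\dots,n-1}$, where $j$ is the smallest vertex lying in a facet containing $n$---and skeleta of simplices are vertex decomposable. This is the paper's link argument; inserting it, together with the gap-free-facet result above, is what is needed to turn your outline into a complete proof.
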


\begin{proof}

We induct on number of vertices. Assume $\Delta$ is not a simplex and let $n$ be the largest vertex of $\Delta$. We will show that $n$ is a shedding vertex.

First consider $\del_{\Delta}(n).$ This complex is unit interval under the original vertex ordering (see \cite[Lemma~58]{BSV}). Since $\Delta$ is strongly connected and unit interval, $\Delta$ contains the facets $H_1,H_2,\dots,H_{n-d}$ where $H_i$ is the gap-free $d$-dimensional facet with lowest vertex $i$ \cite[Theorem~56]{BSV}. Thus $\del_{\Delta}(n)$ will contain the facets $H_1,H_2,\dots,H_{n-d-1}$. Therefore $\del_{\Delta}(n)$ is also strongly connected \cite[Theorem~56]{BSV}. Since $\del_{\Delta}(n)$ is unit interval and strongly connected with $n-1$ vertices, it is vertex decomposable by induction.

We now consider $\link_{\Delta}(n).$ Let $j$ be the smallest vertex that appears in a facet of $\Delta$ containing $n$. Since $\Delta$ is unit interval, this means that $\link_{\Delta}(v)$ is the $(d-1)$-skeleton of the simplex on vertex set $\set{j,j+1,\dots,n-1},$ 
which is known to be vertex decomposable. Thus $n$ is a shedding vertex.

\end{proof}

We recall the definition of shelling completable, which was introduced in \cite{CDGO}: If there exists a shelling of $\Delta$ (a complex on $n$ vertices) that can be taken as the initial sequence of some shelling of the $d$-dimensional skeleton of a simplex on $n$ vertices, we call $\Delta$ \textit{shelling completable}. We immediately get the following corollary due to \cite[Corollary~2.11]{CDGO}, which shows that pure vertex decomposable complexes are shelling completable. 

\begin{corollary}\label{cor:unitintervalcompletable}
Let $\Delta$ be a  pure strongly connected simplicial complex and assume that $[n]$ is a unit interval order on its vertex set. Then $\Delta$ is shelling completable.
\end{corollary}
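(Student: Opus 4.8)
The plan is to obtain this as an immediate consequence of Theorem~\ref{thm:unitintervaldecomposable} together with the result of \cite{CDGO}. Given $\Delta$ pure and strongly connected on vertex set $[n]$ with $[n]$ a unit interval order, let $d$ denote its dimension. Then $\Delta$ is a pure strongly connected $d$-dimensional complex equipped with a unit interval order, so Theorem~\ref{thm:unitintervaldecomposable} applies and shows that $\Delta$ is vertex decomposable. Next I would invoke \cite[Corollary~2.11]{CDGO}, which asserts that every pure vertex decomposable complex on $n$ vertices is shelling completable; that is, it admits a shelling that occurs as an initial segment of some shelling of the $d$-skeleton of the simplex on $n$ vertices. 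Combining these two facts yields that $\Delta$ is shelling completable.

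I do not anticipate any genuine obstacle here: the substantive argument is already carried out in Theorem~\ref{thm:unitintervaldecomposable}, and the implication ``pure and vertex decomposable $\Rightarrow$ shelling completable'' is precisely the content of \cite[Corollary~2.11]{CDGO}. The only routine point to confirm is that the hypotheses align --- namely that \cite[Corollary~2.11]{CDGO} requires only purity and vertex decomposability, with no additional connectivity hypothesis beyond what we already have --- and that the ambient dimension $d$ used in the definition of shelling completable is the dimension of $\Delta$. Both are immediate, so the corollary follows with no further work.
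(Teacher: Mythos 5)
Your proposal is correct and matches the paper's argument exactly: the paper also deduces the corollary by combining Theorem~\ref{thm:unitintervaldecomposable} with \cite[Corollary~2.11]{CDGO}, which states that pure vertex decomposable complexes are shelling completable. No further comment is needed.
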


Simon's Conjecture \cite{Si94} posits that the $d$-skeleton of the simplex on $n$ vertices is extendably shellable; this is equivalent to the claim that all shellable complexes are in fact shelling completable \cite{CDGO}. Simon's Conjecture is known to be true when $d \le 2$ as proved in \cite{BE94} (which actually showed that all rank $3$ matroid independence complexes are extendably shellable, something that is not true in higher ranks) and $d \ge n-3$ proved independently by \cite{BYZ19} and \cite{Do21}.

A complex can have lex shelling orders without satisfying any of the aforementioned vertex orders. 

\begin{example}\label{ex:matroid_not_semi_closed}
Let $\Delta = \ideal{123,125,234,245}.$ This complex is a matroid independence complex, thus every vertex order gives a lex shelling, and the complex is vertex decomposable. However, this complex has no semi-closed vertex orders (cf.~\cite[Lemma~46]{BSV}).
\end{example}

\section*{Acknowledgments}

We thank Bruno Benedetti, Anton Dochtermann, Joseph Doolittle, Florian Frick, Alexander Lazar, and Lisa Seccia for helpful conversations. We also thank Joseph Doolittle and Alexander Lazar for computational help and allowing the use of Examples~\ref{ex:VD_no_lex_shellings} and \ref{ex:Hachimori}.


{\small

}
\end{document}